\newcommand{\var}{\operatornamewithlimits{var}}
\renewcommand{\epsilon}{{\varepsilon}}
\newcommand{\cC}{{\mathcal C}}
\newcommand{\cI}{{\mathcal I}}
\newcommand{\NN}{{\mathbb N}}
\newcommand{\RR}{{\mathbb R}}
\theoremstyle{plain}
\newtheorem{lemma}{Lemma}[section]
\newtheorem{theorem}{Theorem}
\newtheorem*{theoremstar}{Theorem}
\theoremstyle{definition}
\newtheorem{remark}[lemma]{Remark}
\begin{document}

\title[Invariant measures for maps with critical and singular points]
{Invariant measures for interval maps \\ 
with critical points and singularities}

\author{V\'\i tor Ara\'ujo} 
\address{Instituto de Matematica, UFRJ, 
CP 68.530, 
Rio de Janeiro 21.945-970
Brazil and 
Centro de Matematica, Universidade do Porto, Rua do Campo
Alegre 687, 4169-007 Porto, Portugal}
\email{vitor.araujo@im.ufrj.br; vdaraujo@fc.up.pt}

\author{Stefano Luzzatto}
\address{Mathematics Department, Imperial College London, SW7 2AZ, UK}
\urladdr{http://www.ma.ic.ac.uk/~luzzatto}
\email{stefano.luzzatto@imperial.ac.uk}

\author{Marcelo Viana}
\address{Instituto de Matematica Pura e Aplicada, Est. D. Castorina
110, Rio de Janeiro, Brazil}
\urladdr{http://www.impa.br/~viana}
\email{viana@impa.br}

\thanks{This research was partly supported by a Royal Society
International Joint Project Grant and EPSRC grant number GR T09699
01. V.A. was also supported by FAPERJ, CNPq (Brazil) and CMUP-FCT
(Portugal). M.V. was also supported by CNPq (Brazil), 
FAPERJ, and PRONEX-Dynamical Systems. 
The authors acknowledge the hospitality of Imperial College
London and IMPA Rio de Janeiro where most of this work was carried
out. Thanks also to Colin Little and to the anonymous referee 
for their careful reading of a
preliminary version of the paper and for their useful remarks.}

\date{20 February 2009}


\begin{abstract}
We prove that, under a mild summability condition on the
growth of the derivative on critical orbits any piecewise monotone 
interval map possibly containing discontinuities and singularities
with infinite derivative (cusp map)
admits an ergodic
invariant probability measures which is absolutely continuous with
respect to Lebesgue measure.
\end{abstract}

\maketitle

\section{Introduction and statement of results}
\label{s.int}

\subsection{Introduction}
The existence of absolutely continuous invariant probability measures
(\emph{acip}'s) for dynamical systems
is a problem with a history going back more than 70 years, see for
example pioneering papers by Hopf \cite{Hop32} and Ulam and von
Neumann \cite{UlaNeu47}. Notwithstanding an extensive amount of
research in this direction in the last two or three decades, the
problem is still not completely solved even in the one-dimensional
setting which is the focus of this paper. 
Quite general conditions are known which guarantee the existence of
\emph{acip}'s for uniformly expanding maps in the smooth case or
possibly admitting singularities, i.e. discontinuities with possibly
unbounded derivatives (see \cite{Via97}\cite{Luz06} for additional
remarks and references), and for smooth maps with a finite number of
critical points (see \cite{BruLuzStr03} for first and  strongest results
including decay of correlations, and 
\cite{BruRivSheStr08} for the most recent and possibly the most
general conditions for the existence of absolutely continuous
invariant measures in this setting) and even for smooth maps with a
countable number of critical points \cite{AraPac07}. We are
interested here in a  general class of maps which contain  critical
points \emph{and} singularities. 

\begin{figure}[h]
\includegraphics{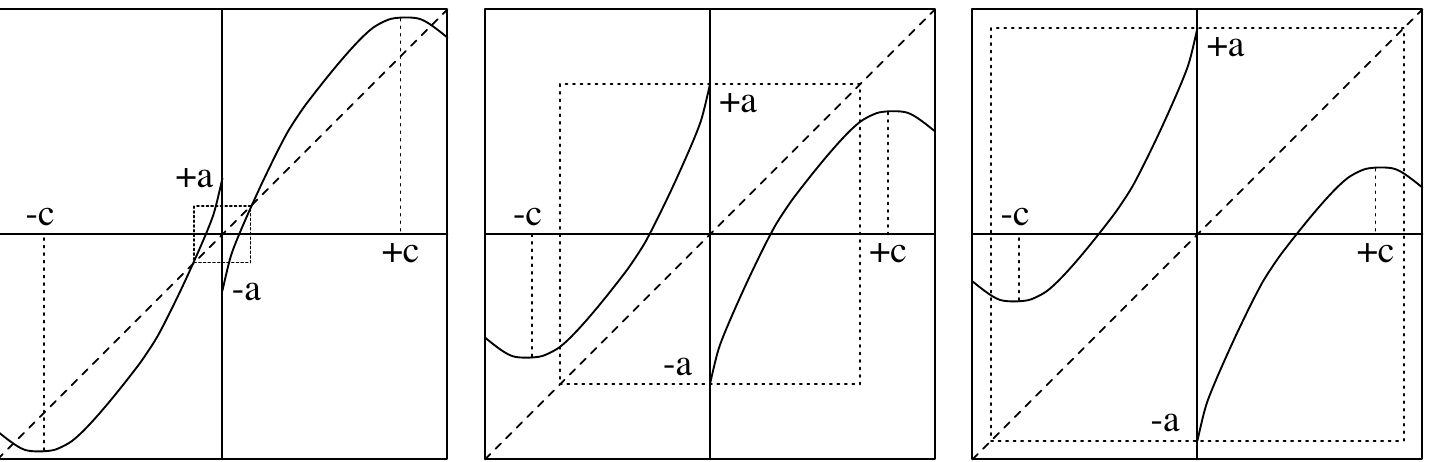}
\caption{Interval maps with critical points and singularities}
\end{figure}
A natural family of maps belonging to this class was introduced in
\cite{LuzTuc99, LuzVia00} and motivated by the study of the
return map of the Lorenz equations near classical parameter values,
see Figure 1. It is clear from the arguments in these papers, that
the presence of both critical points and singularities and their
interaction can give rise to significant technical as well as
fundamental issues. In particular, as we shall see in the present
setting, it is not enough to have just some expansivity conditions in
order to obtain the existence of an acip, as expansivity might occur
due to the regions of unbounded derivative even when the deeper
dynamical structure of the map is very pathological. Moreover, it is
possible that the interaction of critical points and singularities
could give rise to new phenomena which are still unexplored.

\subsubsection{Exponential growth and subexponential recurrence}

Some general results for the existence of \emph{acip}'s and their
properties in  maps with critical points and singularities were
obtained in \cite{AlvLuzPin04} under the assumption that
\emph{Lebesgue almost every point} satisfy some exponential
derivative growth and  subexponential recurrence conditions. These
conditions provide an interesting conceptual picture but may be hard
to verify in practice. 
On the other hand, it was proved in \cite{LuzTuc99} \cite{LuzVia00}
that with positive probability in the parameter space of Lorenz-like
families, 
the orbits of the \emph{critical points}
 satisfy such exponential derivative growth and  subexponential
recurrence conditions. In \cite{DiaHolLuz06} it was shown, within a
more general setting of maps with multiple critical points and
singularities, that these conditions are in fact sufficient to guarantee
the existence of an ergodic \emph{acip} (from which it can in fact be
proved that Lebesgue almost every point also satisfies such
conditions).

\subsubsection{Summability conditions}\label{sumcon}
Our aim in this paper is to obtain the same conclusion but relax as
much as possible the conditions on the orbit of the critical points,
to include in particular cases in which the derivative growth may be
subexponential and/or the recurrence of the critical points
exponential. A crucial observation  concerning the difference between
the smooth case and the case with singularities discussed here is
that in the smooth case, for which in particular the derivative is
bounded, any condition on the growth of the derivative
 is also implicitly a condition on the recurrence to the critical
set. Indeed sufficiently strong recurrence to the critical set will
always kill off any required derivative growth. On the other hand,
this is not the case in our setting. Derivative growth may be
exponential but arise as a consequence  of very strong recurrence to
the singularities even if we have at the same time very strong
recurrence to the critical set. Strong recurrence to either the
singular or the critical set brings its own deep structural problems
and can be an intrinsic obstruction to the existence of an
\emph{acip}.  We shall formulate below 
a condition which simultaneously
keeps track of the growth of the derivative along critical orbits and 
of the recurrence of such orbits to the critical set within a single
summability condition. This optimizes the result to include a larger
class of maps than would be possible by having to independent
conditions both of which need to be satisfied.  We
conjecture that it is not possible to obtain a general result on the
existence of acip's in the presence of both critical points and
singularities by assuming only conditions on the derivative growth of
critical points.

\subsection{Statement of results}

We now give the precise statement of our result. 
We let \( \mathcal F \) denote the class of interval map satisfying
the conditions formulated in 
Sections \ref{C1}, \ref{C2} and \ref{C3} below. Then we have the
following

\begin{theoremstar}\label{th:physical}
Every map  \( f \in \mathcal F \) admits a finite number of
absolutely continuous invariant (physical)  probability measures
whose basins cover \( I \) up to a set of measure 0. 
    \end{theoremstar}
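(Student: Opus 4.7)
The plan is to reduce the problem to a first-return (induced) Markov map with uniformly expanding branches and bounded distortion, and then transfer invariant measures back to $f$. First I would choose a suitable reference domain $\Delta \subset I$ bounded away from the critical/singular set $\mathcal{C}$, and partition it into a countable family of subintervals together with a return time function $R: \Delta \to \mathbb{N}$. The partition is built inductively: start from a partition of $\Delta$ according to the components of $f^{-1}(\Delta)$, and then refine whenever an orbit comes close to $\mathcal{C}$, following the usual ``binding period'' philosophy used for maps with critical points. A piece is declared to have returned at time $n$ once $f^n$ maps it diffeomorphically onto $\Delta$ with definite size.

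The second step is to control distortion and expansion on each branch. Away from $\mathcal{C}$ the map is smooth with uniformly bounded derivative, so standard Koebe/distortion estimates apply; near a critical point one loses expansion during a binding period whose length and distortion are controlled in terms of the derivative growth along the critical orbit to which it is bound; near a singular point one gains expansion but must show that the unbounded derivatives do not produce spurious distortion. The summability condition of Section \ref{sumcon} is precisely what glues these two regimes together: it enforces enough expansion along critical orbits to overcome the loss during binding, while keeping the recurrence to $\mathcal{C}$ tame enough that binding periods sum to a finite total cost. From this I would deduce that $F = f^R : \Delta \to \Delta$ has uniformly expanding branches and bounded distortion, and that the partition metric entropy (equivalently, the tail $\mathrm{Leb}\{R>n\}$) is summable.

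Third, having realized $F$ as a piecewise expanding Markov map with bounded distortion, I would invoke a Folklore/Rychlik-type theorem to obtain a finite collection of ergodic $F$-invariant absolutely continuous probability measures $\nu_1, \ldots, \nu_k$ whose basins exhaust $\Delta$ up to Lebesgue zero. Each $\nu_j$ is pushed forward to an $f$-invariant acip by the standard Kac-type construction
\[
\mu_j \;=\; \frac{1}{\int R\, d\nu_j}\sum_{i=0}^{\infty} f^i_*\bigl(\nu_j|_{\{R>i\}}\bigr),
\]
which is well-defined thanks to the integrability of $R$. Finiteness of the collection $\{\mu_j\}$ follows because distinct ergodic $f$-acips induce distinct ergodic $F$-acips on $\Delta$. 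To show that the basins of the $\mu_j$ cover $I$ up to a Lebesgue-null set, I would argue that Lebesgue-a.e. $x \in I$ eventually enters $\Delta$ under iteration, using the hypothesis on critical orbits together with a slow-recurrence estimate for typical points (so that no positive-measure set is trapped near $\mathcal{C}$).

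The principal obstacle is the simultaneous presence of critical points and singularities in the construction of $\Delta$ and $R$. Near critical points one must run a binding argument that compares the orbit of a generic point with that of a nearby critical point; near singularities one must prevent the possibly infinite derivative from injecting dynamics that is not actually hyperbolic in any controlled sense. Making these two mechanisms compatible inside a single inductive construction, while keeping distortion bounded and the return-time tail integrable, is the technical heart of the argument; everything else is reduction to classical Folklore theory for piecewise expanding maps.
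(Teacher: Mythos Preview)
Your outline follows the Gibbs--Markov route used in earlier work such as \cite{DiaHolLuz06}: build a full-branched induced return map with uniformly bounded distortion, then apply a Folklore theorem. The paper deliberately abandons this route. Its induced map $\hat f$ is \emph{not} full-branched and does \emph{not} have bounded distortion; see the Remark following Lemma~\ref{distreturn}, where the one-step distortion on a branch with binding period $p$ is shown to be of order $d(c_{p-1})^{2(\ell-1)(\ell_{p-1}-1)/(2\ell-1)}$, which is unbounded in $p$. What the paper proves instead is the weaker \emph{summable variation} condition $\sum_{I\in\mathcal I}\var_M \omega_I<\infty$ together with summable inducing times, and then invokes Rychlik's theorem \cite{Ryc83}, which requires neither bounded distortion nor long branches.

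The genuine gap in your proposal is the claim that condition $(\star)$ delivers bounded distortion for a Gibbs--Markov tower. You assert that $(\star)$ ``is precisely what glues these two regimes together'' but give no mechanism, and none is evident: under the stronger hypotheses of exponential growth and subexponential recurrence the inductive Gibbs--Markov construction of \cite{DiaHolLuz06} does go through, but $(\star)$ permits subexponential derivative growth and much faster recurrence, and in that regime the refinement procedure needed to obtain full branches with uniform distortion and summable return-time tail is not known to succeed. The paper's point (see the Remark after Theorem~1) is precisely that one can bypass this difficulty: a single, explicitly defined inducing step---no inductive refinement, no large images---already satisfies Rychlik's hypotheses, and the core calculation (Lemma~\ref{le:varest} combined with the binding estimates of Section~\ref{s:binding}) shows that $(\star)$ is exactly the condition making $\sum_I \var_M\omega_I$ converge. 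So your plan is not wrong in spirit, but the step ``bounded distortion from $(\star)$'' is unsupported and is in fact what the paper is designed to avoid.
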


\subsubsection{Nondegenerate critical/singular set}\label{C1}
Let $M$ be an interval and $f:M\to M$ be
a piecewise $C^2$ map: 
By this we mean that there exists a finite set \( \mathcal C' \) such
that \( f \) is \( C^2 \) and monotone on each
connected component of \( M\setminus\mathcal C'\) and admits a
continuous extension to the boundary so that 
\( f(c) : = \lim_{x\to c^{\pm}} f(x)\)  exists.
We denote by \( \mathcal C\)  the set of all ``one-sided critical
points'' $c^+$ and $c^{-}$ and define corresponding one-sided
neighbourhoods
\[
\Delta(c^{+},\delta) = (c^{+},c^{+}+\delta) \quad\text{and}\quad
\Delta(c^{-},\delta) = (c^{-}-\delta,c^{-}),
\]
for each $\delta>0$. For simplicity, from now on we use $c$ to
represent 
the generic element of \( \mathcal C\) and write $\Delta$ for
$\cup_{c\in\cC} \Delta(c,\delta)$.  We assume that each 
\( c\in\mathcal C\) has a well-defined (one-sided) critical order
$\ell = \ell(c) > 0$ in the sense that 
\begin{equation}\label{eq:der0}
 |f(x)-f(c)|
\approx  d(x,c)^{\ell} \quad \text{ and } \quad 
 |Df(x)|\approx  d(x,c)^{\ell-1} \quad \text{ and } \quad 
 |D^2f(x)|\approx  d(x,c)^{\ell-2}
\end{equation}
for all $x$ in some $\Delta(c,\delta)$. Note that we say that \(
f\approx g \) if  the ratio \( f/g \) is bounded above and below
uniformly in the stated domain. 
If  \( \ell(c) <1 \) we say that \( c \) is a \emph{singular} point
as this
implies unbounded derivative near \( c \); if  \( 1< \ell(c) \) we
say that \( c \) is a \emph{critical} point
as this implies that the 
derivative tends to \( 0 \) near \( c \). 
We shall assume also that \( \ell(c) \neq 1 \) for every \( c \) as
this would be a degenerate case which is not hard to deal with
but would require having to introduce special notation and special
cases,  whereas the other cases can all be dealt with in
a unified formalism.

\begin{remark}
For future reference we point out that this immediately implies
\begin{equation}\label{derdist}
 \frac{|D^{2}f(x)|}{|Df(x)|} \approx \frac{1}{d(x)} 
\end{equation}
 for all \( x \),  where \( d(x) \) denotes the distance of the point
\( x \) to the critical/singular set \( \mathcal C \) (indeed this is
the actual property of which we will make use). 
 \end{remark}

\subsubsection{Uniform expansion 
outside the critical neighbourhood}
\label{C2}
We suppose that $f$ is ``uniformly expanding away from
the critical points'', meaning that the following two conditions are
satisfied: there exists a constant \( \kappa>0 \), independent of \(
\delta \), such that for every point \( x \) and every integer \(
n\geq 1 \) such that $d(f^{j}(x),\mathcal
C)>\delta$ for all $0\le j \le n-1$ and $d(f^{n}(x),\mathcal
C) \leq \delta$ we have 
\begin{equation}\label{kappa}
    |Df^{n}(x)| \geq \kappa
\end{equation}
and,  for every $\delta>0$ there exist constants $c(\delta)>0$
and $\lambda(\delta)>0$ such that 
\begin{equation}\label{eq:ue}
|Df^{n}(x)| \ge c(\delta) e^{\lambda(\delta) n}
\end{equation}
for every $x$ and $n\ge 1$ such that $d(f^{j}(x),\mathcal
C)>\delta$ for all $0\le j \le n-1$.

We remark that both these conditions 
are quite natural and are often satisfied for smooth maps without
discontinuities.  More specifically, the first  one is satisfied 
if \( f \) is \( C^{3} \), has negative Schwarzian derivative and 
satisfies the property that the the derivative along all critical
orbits tends to infinity, see Theorem 1.3 in \cite{BruStr03}. 
The second  is satisfied in even greater generality, namely 
when $f$ is  $C^{2}$ and all periodic points are
repelling  \cite{Man85}. 

\subsubsection{Summability condition along the critical orbit}
\label{C3}
For each \( c\in\mathcal C \) we  write
\[
D_n(c)=|(f^n)'(f(c))|  \quad\text{ and } \quad d(c_{n}) =
d(c_{n},\mathcal C)
\]
to denote the derivative along the orbit of \( c \) and the distance
of \( c \) from the critical set respectively. 
We then assume that
for every critical point \( c \) with \( \ell = \ell(c) > 1 \) 
we have 
\begin{equation}\tag{\( \star \)
}
\sum_{n}
\frac{-n\log d(c_{n})}
{d(c_{n}) D_{n-1}^{1/(2\ell-1)}} < \infty.
 \end{equation}

\begin{remark}
This condition plays off the derivative against the recurrence in
such a way as to  optimize to some extent the class of maps to which
it applies. As mentioned in Section \ref{sumcon} above, we cannot
expect to obtain the conclusions of our main theorem in this setting
using a condition which only takes into account the growth of the
derivative. Notice that condition (\( \star \)) is satisfied 
 if the derivative is growing exponentially fast  and the recurrence
is not faster than 
exponential in the sense that 
\[ 
D_{n-1}\gtrsim e^{\lambda n}\quad  \text{ and } \quad  d(c_{n})
\gtrsim e^{-\alpha n} \quad 
\text{ with }  \quad  \alpha < \frac{\lambda}{2\ell-1}.
 \]
 Here and in the rest of the paper, 
 the symbol \( \gtrsim \)   means that the inequality holds up to 
 some multiplicative constant, i.e. there exists a constant \( C>0 \) 
 independent of \( n \) or any other constants, 
 such that  \( D_{n-1}\geq  e^{\lambda n} \)
 and \( d(c_{n}) \geq C e^{-\alpha n}  \). 
\end{remark}

\section{The main technical theorem}

\subsection{Inducing}
Our strategy for the proof is to  construct a countable 
partition \( \mathcal I \) of \( M \) (mod 0) into open intervals , define 
an inducing time function \( \tau: M \to \mathbb N \) 
which is constant on
elements of \( \mathcal I \), and  let 
 \( \hat f: M \to M \) denote the induced map defined by 
\[ \hat f(x) = f^{\tau(x)}(x). \]
This induced map is uniformly expanding on each element of \( \mathcal
I\) but does \emph{not} have many
desirable properties such as uniformly bounded distortion or long
branches. Nevertheless it has the two key properties we shall require
which are 
summable inducing times and summable variation. We recall that   the
\emph{variation} of a function
  $\varphi:M\to\RR$ over a subinterval $I=[a,b]$ of $M$ is
  defined by
$$
\var_I \varphi = \sup \sum_{i=1}^{N}
|\varphi(c_{i})-\varphi(c_{i-1})|
$$
where the supremum is taken over all $N\ge 1$ and all choices of
points
$a=c_{0}<c_{1}<\cdots<c_{N-1}<c_N=b$. 
For each \( I\in \mathcal I \) we define the function \( \omega_{I}:
M \to M \)
by 
\[ \omega_{I}(x)  = \left|\frac{\mathbb{1}_{I}(x)}{f'(x)}\right|
\]
Our main technical result in this paper is the following

\begin{theorem}
There exists a countable partition \( \mathcal I \) of \( M \) (mod 0)
and an inducing time function \( \tau: M \to \mathbb N \), 
constant on elements of \( \mathcal I \), such that the induced map
\( \hat f = f^{\tau(x)}(x) \) is uniformly expanding and satisfies
the 
 the following properties. 
\begin{enumerate}
\item (Summable variation) \[ \sum_{I\in\mathcal I}
\var_{M}\omega_{I} < \infty \]
\item (Summable inducing times) 
\[ \sum_{I\in \mathcal I} \tau(I) |I|< \infty \]
\end{enumerate}
\end{theorem}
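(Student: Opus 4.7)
The plan is to define the induced map $\hat f$ via a binding/escape construction in the spirit of Benedicks--Carleson, adapted here to allow singularities as well as critical points. I fix a small $\delta>0$ and, for each $c \in \mathcal C$, partition the one-sided neighbourhood $\Delta(c,\delta)$ into exponentially decreasing shells
\[
\Delta^r(c) = \{x \in \Delta(c,\delta) : e^{-(r+1)} \le d(x,c) < e^{-r}\}, \qquad r \ge r_0.
\]
For $x \in \Delta^r(c)$ the orbit shadows the critical orbit of $c$ during a \emph{binding period} $p = p(x)$; using the Taylor estimate $|f^j(x)-c_j| \approx D_{j-1}(c)\,|x-c|^{\ell}$, which is valid while the shadowing holds, I declare the binding to end at the first $p$ for which this distance reaches a definite fraction of $d(c_p)$. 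After time $p$ the orbit lies at distance $\gtrsim d(c_p)$ from $\mathcal C$, and (\ref{eq:ue}) then forces exponential growth until the next visit to $\Delta$, at which point the construction is iterated. The partition $\mathcal I$ is obtained by labelling each point with the combinatorial data of its successive deep returns (critical point, shell index, return ordinal) and stopping the first time the accumulated expansion exceeds a large uniform threshold; by construction $\hat f$ is uniformly expanding on every element of $\mathcal I$.

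Next I would set up size and distortion estimates branch by branch. A bounded-distortion argument on a binding interval, based on applying (\ref{derdist}) to $\log|Df^j|$, gives $|Df^j(y)| \approx |Df^j(y')|$ for $y,y'$ in a common binding branch at time $j \le p$; chaining these across the binding produces the post-binding lower bound $|Df^p(x)| \gtrsim D_{p-1}(c)^{1/(2\ell-1)}$ for $x \in \Delta^r(c)$ once the shell scale $e^{-r}$ is matched to $d(c_p)$ by the binding rule. This is the estimate that dictates the exponent appearing in $(\star)$. Consequently a branch with most recent deep return at critical time $n$ has diameter and inducing time controlled respectively by $|I| \lesssim d(c_n)\, D_{n-1}(c)^{-1/(2\ell-1)}$ (with constants depending only on $\delta$) and by $\tau(I) \approx n + O\bigl(-\log d(c_n)/\lambda(\delta)\bigr)$ for the recovery back to unit scale. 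For the variation, the elementary inequality
\[
\var_M \omega_I \;\lesssim\; \sup_I \tfrac{1}{|f'|}\bigl(\var_I \log|f'| + 2\bigr)
\]
combined with (\ref{derdist}) reduces $\var_I\log|f'|$ to a bounded multiple of $\log\bigl(\sup_I d/\inf_I d\bigr)$, which is uniformly controlled on each branch by the distortion lemma.

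Plugging these estimates into the two series turns each into a double sum over the return time $n$ and the shell index $r$, collapsed by a short combinatorial accounting to a single sum in $n$ dominated by
\[
\frac{-n\log d(c_n)}{d(c_n)\,D_{n-1}(c)^{1/(2\ell-1)}},
\]
which is exactly the quantity assumed finite in $(\star)$. Branches that avoid deep returns form a geometric tail controlled by (\ref{kappa}) and (\ref{eq:ue}) and are harmless. The main obstacle, I expect, is handling critical and singular points inside a single distortion scheme: when $\ell<1$ a branch ending near a singularity has $1/|f'|$ very small but very rapidly varying, and it is precisely the ratio bound (\ref{derdist}) that keeps $\var\omega_I$ integrable; when $\ell>1$ the derivative loss at a return must be exactly offset by the post-binding recovery $D^{1/(2\ell-1)}$, and both effects must coexist inside one induction. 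Arranging all constants to depend only on $\delta$ rather than on the depth of nested bindings, and making the binding rule interlock across successive returns without leaking the $1/(2\ell-1)$-factor, are the delicate technical points that drive the whole construction.
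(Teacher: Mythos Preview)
Your proposal follows the Benedicks--Carleson template of \emph{iterating} the binding construction through successive deep returns until a large expansion threshold is met, with the aim of producing a Gibbs--Markov induced map with uniformly bounded distortion. This is essentially the approach of \cite{DiaHolLuz06}, and the paper's point is precisely to \emph{avoid} it. The paper's induced map involves at most \emph{one} binding period per branch: $\tau=q_0$ if the orbit stays outside $\Delta$ for $q_0$ steps, and $\tau=l_0+p_0$ otherwise, where $l_0<q_0$ is the first entry time to $\Delta$ and $p_0$ the associated binding period. There is no nesting, no combinatorial accounting over sequences of returns, and --- crucially --- no attempt at uniform distortion. The paper explicitly accepts that $\hat f$ has unbounded distortion and short images, and compensates by verifying Rychlik's summable-variation condition directly. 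What this buys is a construction that fits in two pages; what your route costs is exactly the ``arranging all constants to depend only on $\delta$ rather than on the depth of nested bindings'' that you flag as delicate at the end.

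There is also a concrete gap in your variation estimate. The inequality
$\var_M\omega_I\lesssim\sup_I\tfrac{1}{|D\hat f|}\bigl(\var_I\log|D\hat f|+2\bigr)$
is fine, but $\var_I\log|D\hat f|$ is \emph{not} controlled by $\log\bigl(\sup_I d/\inf_I d\bigr)$ on the base interval $I$ alone: since $\log|D\hat f(x)|=\sum_{j=0}^{\tau-1}\log|Df(f^j(x))|$, one needs $\sum_{j=0}^{\tau-1}\int_{I_j}dx/d(x)$, summed over \emph{all} iterates. Even in the paper's single-binding setting this sum is not uniformly bounded; Lemma~\ref{le:globaldist} shows it is of order $\log d(c_{p-1})^{-1}$, and the generalized distortion $\mathcal D(f^\tau,I)$ contributes a further factor $d(c_{p-1})^{2(\ell-1)(\ell_{p-1}-1)/(2\ell-1)}$ (Lemma~\ref{distreturn}). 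Both terms must be carried through and then killed by condition $(\star)$ after rewriting $D_{p-1}\approx D_{p-2}\,d(c_{p-1})^{\ell_{p-1}-1}$. In your iterated scheme each nested binding would contribute its own such term, and your ``short combinatorial accounting'' collapsing the multi-sum to a single sum in $n$ is exactly where the difficulty sits; the sketch does not indicate how it would be done.
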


Theorem 1 implies the Main Theorem by known arguments. Indeed, by a
result of Rychlik the summable variation property together with
uniform expansion implies that \(
\hat f \) admits a finite number of 
ergodic absolutely continuous invariant measure
whose basins cover \( I \) up to a set of measure zero
\cite{Ryc83,  Via97, BoyGor97}. By standard arguments the
summable inducing time property implies that these measures 
can be
pulled back to a absolutely continuous invariant probability measure
for the original map \( f \) satisfying the same properties 
\cite{MelStr93}. 

\begin{remark}
The arguments used in \cite{BruLuzStr03, AlvLuzPin04, DiaHolLuz06} 
also involve the
construction of an induced map with summable return times, but in
those papers the induced map has some very strong properties such as
uniformly bounded distortion and the Gibbs-Markov property (the
image of each partition element maps diffeomorphically to the entire
domain of definition of the induced map). To achieve these properties a
quite complicated construction is required, involving the inductive
definition of an infinite number of finer and finer partitions
together with a combinatorial and probabilistic argument showing that 
the procedure eventually converges. Besides the fact that we deal here
with a significantly larger class of systems, a major difference is
the construction of an induced map satisfying a different set of
conditions as formalized in the summable variation property stated in 
the theorem. These induced 
maps do not necessarily have bounded distortion and
there is no uniform lower bound for the size of the images. 
For this reason the construction of these induced maps is *much*
simpler, and in fact can be fully achieved in less than two pages of
text in the following section. The rest of the paper is just devoted
to checking the required properties.

\end{remark}

\subsection{Definition of the induced map}

The induced map \( \hat f \) can in fact be defined in complete
generality with essentially no assumptions on the map \( f \). We
will only require our assumptions to show that this induced map has
the desired properties. 
\subsubsection{Notation}
 For a point \( x \) in the neighbourhood 
\( \Delta(c,\delta)  \) of one of the critical points \( c \), we let 
\[ \hat I=\hat I_{0} = (x,c) \quad \text{ and } \quad \hat I_{j} =
(x_{j}, c_{j}) = (f^{j}(x), f^{j}(c)). \]
For an arbitrary interval \( I \) we let \( |I| \) denote the length
of \( I \) and \( d(I) \) denote it's distance to the critical set \(
\mathcal C \), i.e. the minimum distance of all point in \( I \) to
\( \mathcal C \). 
For each critical point \( c \) with  \( \ell=\ell(c) > 1 \), 
 and 
every integer \( n \geq 1 \) we let 
\begin{equation}\label{gamman}
\gamma_{n}(c) = \min\left\{\frac{1}{2}, \  
\frac{1}{d(c_{n}) D_{n-1}^{1/(2\ell-1)}}\right\}
\end{equation}
It  follows immediately  from 
the summability condition (\( \star \)) 
that 
\[ \sum_{n}\gamma_{n} < \infty.
\]

\subsubsection{Binding}
Given $c\in\mathcal{C}$,we define the \emph{binding period} of a
point 
$x\in \Delta(c,\delta)$ as follows. If \( \ell(c) < 1 \) we just
define the  binding period as \( p=1 \). Otherwise we define the
binding period as the \emph{smallest} $p=p(x)\in\NN$ such
that 
\[ 
|\hat I_{j}| \leq \gamma_j\,d(c_{j})  \text{ for } 1\leq j \leq p-1
\quad\text{ and } \quad
|\hat I_{p}| > \gamma_p\,d(c_{p}). 
\]
For each $c\in\cC$ and $p\ge 1$, define $I(c,p)$ to be the
interval of points $x\in\Delta(c,\delta)$ such that
$p(x)=p$. 
Observe that from the definition of binding it follows immediately
that 
\[
h(\delta):=\inf\{p(x):
  x\in\Delta(c,\delta),\ c\in\mathcal C\} \to \infty
  \]
\emph{monotonically } when $\delta\to 0$.  
Notice also that the interval $I(c,p)$ 
may be empty and indeed
that is the case, for instance, for all $p<h(\delta)$.

\subsubsection{Fixing \( \delta \)}\label{fixdelta}
Using the monotonicity of \( h(\delta) \) 
we can fix at this moment and for the rest of the paper 
\( \delta \) sufficiently small so that 
\begin{enumerate}
    \item the critical neighbourhood of size \( \delta \) of all
    critical/singular points are disjoint and the images of the
    critical/singular neighbourhoods are also disjoint from the
    critical/singular neighbourhoods themselves;
\item \( \gamma_{n}< 1/2 \) for all \( n\geq h(\delta) \);
\item \( D_{n-1}^{\frac{1}{2\ell-1}} \gg 2/\kappa \) for all \( n\geq
h(\delta) \). The symbol \( \gg \) here means that 
\( D_{n-1}^{\frac{1}{2\ell-1}}  \) must be larger than some constant
factor of \( 2/\kappa  \) for a constant which depends only on the map
itself and which is determined in the course of the proof but which
could in principle we specified explicitly at this point. 
\end{enumerate}

\subsubsection{Fixing \( q_{0} \)}
We now fix an integer \( q_{0}= q_{0}(\delta) \geq 1 \) sufficiently
large so that 
\[ 
C(\delta) e^{\lambda(\delta)q_{0}(\delta)} \geq 2. 
\]
Notice that the constants \( C(\delta) \) and \( \lambda(\delta) \)
come from the expansion outside the critical neighbourhoods given in
Section \ref{C2}. The choice of \( q_{0} \) is motivated by the fact
that any finite piece of orbit longer than \( q_{0} \) iterations
staying outside a \( \delta \) neighbourhood of the critical points
has an accumulated derivative of at least 2.

\subsubsection{The inducing time}

Let 
\[ 
M_{f}=\{x\in M: f^{i}(x) \notin \Delta \text{ for all } 0\le i <
q_{0}\}
\quad\text{ and } 
\quad M_{b} = M\setminus M_{f}
\]
so that \( M_{f} \)
denotes the set of points of \( M \) which remain outside \(
\Delta \) for the first \( q_{0}-1 \) iterations, and 
\( M_{b} \)
denotes those which enter \( \Delta \) at some time before \( q_{0}
\).
For \( x\in M_{b} \) let
\[ 
l_{0}=l_{0}(x) = \min\{0\leq l < q_{0}: f^{l}(x) \in \Delta\}
\quad\text{ and } \quad p_{0}=p_{0}(f^{l_{0}}(x))
\]
so that \( l_{0} \)
is the first time the orbit of \( x \) enters \( \Delta \) and \(
p_{0} \)
denotes the binding period corresponding to the point \( f^{l_{0}}(x)
\).
Then we define the inducing time by 
\begin{align}
\label{eq:indu2}
\tau (x) = 
\begin{cases} q_{0}&\text{ if } x\in M_{f}\\
l_{0}+p_{0} & \text{ if } x\in M_{b}.
\end{cases}
\end{align}

\subsubsection{The induced map}
We define the induced map as 
\[ \hat f(x) = f^{\tau(x)}(x) \]
and  let $\cI$ denote the partition of $M$ into the maximal
intervals restricted to which the induced map $\hat f$ is
smooth, and write $\cI_f = \cI | M_{f}$ and $\cI_b = \cI |
M_{b}$. 
This completes the definitions of the induced map.

\section{Variation, Distortion and Expansion}

In this section we prove a  
general formula relating the variation, the distortion and 
the expansion.  First of all we define the notion of
\emph{generalized distortion}. This is a very natural notion which is
no more difficult to compute than standard distortion and 
which appears in variation calculations.  Strangely it does not seem
to us to have been defined before in the literature.
For any interval \( I \) and
integer \( n\geq 1 \) we let \( I_{j}=f^{j}(I) \) for \( j=0,..., n
\) and 
define the \emph{(generalized) distortion} 
\[ \mathcal D(f^{n}, I) = 
 \prod_{j=0}^{n-1}\sup_{x_{j}, y_{j}\in I_{j}} 
\frac{|Df(x_{j})|}{|Df(y_{j})|}. 
\]
We remark here that we are taking the supremum over all choices of
sequences \( x_{j}, y_{j} \in I_{j} \). If these sequences are chosen
so that \( x_{j}=f^{j}(x), y_{j}=f^{j}(y) \) for some \( x,y\in I \)
then we recover the more standard notion of distortion. 
In particular, by choosing the sequence \( x_{j} \) arbitrary and the
sequence \(y_{j}=f^{j}(y) \) as the actual orbit of a point, we can
compare the
two products and,in this case,  the definition given above of
generalized
distortion immediately implies 
\begin{equation}\label{supbound}
\prod_{j=0}^{n-1}\sup_{I_{j}}\frac{1}{|Df|} \leq \frac{\mathcal
D(f^{n}, I)
}{|Df^{n}(x)|}
\end{equation}
for any \( x\in I \).  
For future reference we remark also that
by the mean value theorem, there exists some 
\( \xi_{j}\in I_{j} \) such that 
\[ \frac{Df(x_{j})}{Df(y_{j})} = 1+\frac{Df(x_{j})- Df(y_{j})
}{Df(y_{j})} 
= 1 + \frac{D^{2}f(\xi_{j})}{Df(y_{j})}|x_{j}-y_{j}|. \]
Therefore we have
  \begin{equation}\label{gendist}
\sup_{x_{j}, y_{j}\in I_{j}} 
\frac{|Df(x_{j})|}{|Df(y_{j})|}
\leq   1+ \frac{\sup_{I_{j}}|D^{2}f|}{\inf_{I_{j}} |Df|}
    |I_{j}|
\quad
\text{ and } 
\quad
\mathcal D(f^{n}, I) \leq \prod_{j=0}^{n-1}\left( 
1+ \frac{\sup_{I_{j}}|D^{2}f|}{\inf_{I_{j}} |Df|}
    |I_{j}| \right).
 \end{equation}

We are now ready to state the main result of this section.
\begin{lemma}
 \label{le:varest}   
    For any interval \( I  \) and integer \( l\geq 1 \) such that \( 
    f^{l}: I \to f^{l}(I) \) is a diffeomorphism, we have
    \[
    \var_{I}\frac1{|Df^{l}|}\lesssim \frac{\mathcal D(f^{l},
I)}{\inf_{I}|Df^{l}|}
     \cdot  \sum_{j=0}^{l-1}\int_{I_{j}}\frac{dx}{d(x)}.
    \]
 \end{lemma}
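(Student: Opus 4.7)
The plan is to recognize that since $f^l:I\to I_l$ is a diffeomorphism, the orbit of $I$ under $f,f^2,\dots,f^{l-1}$ avoids the critical/singular set, so $f$ is $C^2$ on a neighborhood of each $I_j$ and hence $1/|Df^l|$ is a $C^1$ function of constant sign on $I$. Consequently the variation is simply
\[
\var_I \frac{1}{|Df^l|} = \int_I \left|\frac{d}{dx}\frac{1}{|Df^l(x)|}\right| dx,
\]
and the task reduces to computing this derivative and estimating the resulting integral.

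The natural computation is to write $1/Df^l(x)=\prod_{j=0}^{l-1}1/Df(f^j(x))$ and apply the logarithmic derivative (or, equivalently, differentiate $Df^l$ via the chain rule). A straightforward calculation gives
\[
\left|\frac{d}{dx}\frac{1}{Df^l(x)}\right| \le \sum_{j=0}^{l-1}\frac{|D^2 f(f^j(x))|}{|Df(f^j(x))|}\cdot\frac{|Df^j(x)|}{|Df^l(x)|}.
\]
The crucial observation is that the factor $|Df^j(x)|$ is exactly the Jacobian of the change of variables $y=f^j(x)$ that will move the integral from $I$ to $I_j$, so it will be absorbed cleanly.

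Next I would integrate termwise, bound $1/|Df^l(x)|$ uniformly in $x\in I$ using the elementary inequality $1/|Df^l(x)|\le \mathcal D(f^l,I)/\inf_I|Df^l|$ (which follows, for instance, from \eqref{supbound} applied at the minimizing point of $|Df^l|$), and then perform the change of variables $y=f^j(x)$ in each term. After the Jacobian $|Df^j(x)|\,dx$ becomes $dy$, what remains in the integrand is simply $|D^2f(y)|/|Df(y)|$ on $I_j$. Finally, invoking the pointwise relation \eqref{derdist}, which is a direct consequence of the nondegeneracy condition \eqref{eq:der0}, replaces this ratio by $\approx 1/d(y)$, yielding exactly the claimed bound.

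No step is a serious obstacle here: the differentiation and change of variables are routine, and the only potential subtlety is taking care that $|Df^l|$ has constant sign on $I$ so that $|Df^l|=\epsilon Df^l$ for a fixed $\epsilon\in\{\pm1\}$, making the variation computation reduce to the $L^1$-norm of the derivative. The real content of the lemma lies in the careful bookkeeping of the Jacobian, which is what allows the derivative factors to collapse into the single uniform factor $\mathcal D(f^l,I)/\inf_I|Df^l|$ rather than producing an inflated product.
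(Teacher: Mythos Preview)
Your argument is correct and is a genuinely different route from the paper's. The paper never differentiates $1/|Df^{l}|$ directly; instead it treats $1/Df^{l}=\prod_{j=0}^{l-1}(1/Df)\circ f^{j}$ as an abstract product and peels off one factor at a time using the product rule for variation (V3), obtaining recursively
\[
\var_{I}\frac{1}{Df^{l}} \le \Big(\prod_{j=0}^{l-1}\sup_{I_{j}}\frac{1}{|Df|}\Big)\sum_{j=0}^{l-1}\frac{\var_{I_{j}}(1/Df)}{\sup_{I_{j}}(1/|Df|)},
\]
then bounds the product via \eqref{supbound} and computes $\var_{I_{j}}(1/Df)=\int_{I_{j}}|D^{2}f|/|Df|^{2}$ by (V5), pulling out one factor of $\sup_{I_{j}}1/|Df|$ to cancel the denominator. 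Your approach replaces this recursion by a single application of (V5) to the $C^{1}$ function $1/|Df^{l}|$ on $I$, followed by the logarithmic-derivative identity and a change of variables $y=f^{j}(x)$ that absorbs the Jacobian $|Df^{j}(x)|$. This is shorter and more transparent, and it explains structurally why the generalized distortion appears only as a single global factor. The paper's route, on the other hand, uses only the algebraic BV properties (V3)--(V5) and would survive in situations where $1/Df^{l}$ is merely of bounded variation rather than $C^{1}$; in the present setting both approaches are equally valid. One minor remark: your bound $1/|Df^{l}(x)|\le \mathcal D(f^{l},I)/\inf_{I}|Df^{l}|$ is of course immediate from $1/|Df^{l}(x)|\le 1/\inf_{I}|Df^{l}|$ together with $\mathcal D(f^{l},I)\ge 1$, so the appeal to \eqref{supbound} is not really needed there.
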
   

Before starting the proof we recall
a few elementary properties of functions
with bounded variation which will be used here 
 and later on.  Proofs can be found, for instance, in
\cite{Via97} or \cite{BoyGor97}.
 For any interval $I \subset M$, $a, b\in\RR$, and 
 $\varphi, \psi:M\to\RR$,
 \begin{enumerate}
 \item[(V1)] $\var_{I} |\varphi| \le \var_{I} \varphi$;
 \item[(V2)] $\var_{I} (a\varphi+b\psi)
 \le |a|\var_{I}\varphi + |b|\var_{I}\psi$;
 \item[(V3)] $\var_{I} (\varphi\psi) 
 \le \sup_{I}|\varphi | \, \var_{I}\psi + 
 \var_{I}|\varphi| \, \sup_{I}\psi$;
 \item[(V4)] $\var_{J} \varphi = \var_{I} (\varphi\circ h)$
 if $h:I\to J$ is a homeomorphism;
 \item[(V5)] if $\varphi$ is of class $C^1$ then 
 $\var_{I}\varphi = \int_{I} |D\varphi(x)|\,dx$.
\item[(V6)] for any interval $I$, any bounded variation function
$\varphi$,
and any probability $\nu$ on $I$,
\begin{equation}\label{eq:invarsup}
\int_{I}\varphi\,d\nu - \var_{I}\varphi
\le \inf_{I}\varphi
\le \sup_{I}\varphi
\le \int_{I}\varphi\,d\nu + \var_{I}\varphi.
\end{equation}
In particular, this holds when $\nu =$ normalized Lebesgue 
measure on $I$.
 \end{enumerate}   
   
\begin{proof}
We start by writing
\[ \var_{I}\frac{1}{Df^{l}} = 
\var_{I}\left[\prod_{j=0}^{l-1} \frac{1}{Df}\circ f^{j} \right]
= \var_{I}\left[\left(\frac{1}{Df}\circ f^{l-1} \right)
\left(\prod_{j=0}^{l-2} \frac{1}{Df}\circ f^{j}
\right)\right] 
\]    
Thus, from (V3)  we have 
\[
\var_{I}\frac{1}{Df^{l}} \leq 
\left(\sup_{I} \frac{1}{|Df|}\circ f^{l-1} \right)
\left(\var_{I} \prod_{j=0}^{l-2} \frac{1}{Df}\circ f^{j}\right)+ 
\left(\var_{I}\frac{1}{Df}\circ f^{l-1}\right) 
\left(\sup_{I}\prod_{j=0}^{l-2} \frac{1}{|Df|}\circ f^{j}\right)
\]
Since the supremum of the product is clearly less than or equal to
the product of the supremums this gives 
\[
\var_{I}\frac{1}{Df^{l}} \leq 
\left(\sup_{I} \frac{1}{|Df|}\circ f^{l-1} \right)
\left(\var_{I} \prod_{j=0}^{l-2} \frac{1}{Df}\circ f^{j}\right)+ 
\left(\var_{I}\frac{1}{Df}\circ f^{l-1}\right) 
\left(\prod_{j=0}^{l-2} \sup_{I}\frac{1}{|Df|}\circ f^{j}\right)
\]
Thus, multiplying and dividing through by both the first and last
term of the right hand side of this expression, we get 
\begin{equation}\label{varl}
\var_{I}\frac{1}{Df^{l}}
\leq \left(\prod_{j=0}^{l-1}\sup_{I} \frac{1}{|Df(f^{j})|}\right)
\left[ 
\frac{\var_{I} \prod_{j=0}^{l-2} 
\frac{1}{Df(f^{j})}}{\prod_{j=0}^{l-2} \sup_{I} \frac{1}{|Df(f^{j})|}}
+\frac{\var_{I}\frac{1}{Df(f^{l-1})} }{\sup_{I}
\frac{1}{|Df(f^{l-1})|} } 
\right]
\end{equation}
We have used here the simplified notation \( [Df(f^{j})]^{-1} \) to
denote \( [Df]^{-1}\circ f^{j} \). Using this  bound recursively we
get 
\begin{equation}\label{varlminus1}
\var_{I} \prod_{j=0}^{l-2} 
\frac{1}{Df(f^{j})}
\leq \left(\prod_{j=0}^{l-2}\sup_{I}\frac{1}{|Df(f^{j})|}\right)
\left[ 
\frac{\var_{I} \prod_{j=0}^{l-3} 
\frac{1}{Df(f^{j})}}{\prod_{j=0}^{l-3} \sup_{I}\frac{1}{|Df(f^{j})|}}
+\frac{\var_{I}\frac{1}{Df(f^{l-2})} }{\sup_{I}
\frac{1}{|Df(f^{l-2})|} } 
\right]
 \end{equation}
 and therefore, substituting \eqref{varlminus1} into \eqref{varl} we
get 
 \[ 
 \var_{I}\frac{1}{Df^{l}}
\leq \left(\prod_{j=0}^{l-1} \sup_{I}\frac{1}{|Df(f^{j})|}\right)
\left[ 
\frac{\var_{I} \prod_{j=0}^{l-3} 
\frac{1}{Df(f^{j})}}{\prod_{j=0}^{l-3} \sup_{I}\frac{1}{|Df(f^{j})|}}
+ 
\frac{\var_{I}\frac{1}{Df(f^{l-2})} }{\sup_{I}
\frac{1}{|Df(f^{l-2})|} } 
+\frac{\var_{I}\frac{1}{Df(f^{l-1})} }{\sup_{I}
\frac{1}{|Df(f^{l-1})|} } 
\right]
  \]
  Continuing in this way and and then using (V4) we arrive at 
  \[ 
   \var_{I}\frac{1}{Df^{l}}
\leq \left(\prod_{j=0}^{l-1} \sup_{I} \frac{1}{|Df(f^{j})|}\right)
\left[ 
\sum_{j=0}^{l-1}
\frac{\var_{I}\frac{1}{Df(f^{j})} }{\sup_{I} \frac{1}{|Df(f^{j})|} } 
\right] = 
\left(\prod_{j=0}^{l-1} \sup_{I_{j}} \frac{1}{|Df|}\right)
\left[ 
\sum_{j=0}^{l-1}
\frac{\var_{I_{j}}\frac{1}{Df} }{\sup_{I_{J}} \frac{1}{|Df|} } 
\right] 
   \]
From the definition of generalized distortion, in particular
\eqref{supbound}, 
this gives 
\[ 
   \var_{I}\frac{1}{Df^{l}}
\leq 
\left(\prod_{j=0}^{l-1} \sup_{I_{j}} \frac{1}{|Df|}\right)
\left[ 
\sum_{j=0}^{l-1}
\frac{\var_{I_{j}}\frac{1}{Df} }{\sup_{I_{J}} \frac{1}{|Df|} } 
\right] 
\leq \frac{\mathcal D(f^{l}, I)}{\inf_{I}|Df^{l}|} 
\left[ 
\sum_{j=0}^{l-1}
\frac{\var_{I_{j}}\frac{1}{Df} }{\sup_{I_{j}} \frac{1}{|Df|} } 
\right].
 \]
Finally from  (V5)  and \eqref{derdist} 
we get 
\[ 
\var_{I_{j}}\frac{1}{Df} =
\int_{I_{j}}\left|\frac{D^{2}f}{(Df)^{2}}\right| 
\leq
\sup_{I_{j}}\frac{1}{|Df|}\int_{I_{j}}\left|\frac{D^{2}f}{Df}\right|dx
\lesssim \sup_{I_{j}}\frac{1}{|Df|}\int_{I_{j}} \frac{dx}{d(x,
\mathcal 
C)}.
\]
\end{proof}

\section{Binding}
\label{s:binding}

\subsection{Distortion during binding periods}

\begin{lemma}\label{le:distorcao}
For any \( x\in \Delta \), \( c\in \mathcal C \), the critical point
closest to \( x \),  \( \hat I_{0} = (x,c) \), and 
any \( 1\leq j\leq p(x)-1  \) we have 
\begin{equation}\label{bindingratio}
  \frac{|\hat I_{j}|}{d(\hat I_{j})} \leq  2 \gamma_{j}
  \quad\text{ and }\quad 
     \sup_{x_{j}, y_{j}\in\hat
I_{j}}\frac{|D^{2}f(x_{j})|}{|Df(y_{j})|} \lesssim \frac{1}{d(\hat
I_{j})}
\end{equation}
In particular there exists \( \Gamma>0 \) independent of \( x \)
such that  for all $1\le k \le p(x)-1$ we have
  \[ \mathcal D(f^{k}, \hat I_{1}) \leq \Gamma 
\quad\text{ and } 
\quad \int_{\hat I_{j}} \frac{1}{d(x)} dx \leq 2 \gamma_{j}  
\]
and for all \( y, z\in [x,c] \) we have
\[
 |Df^k(f(y))| \approx  |Df^k(f(z))|.
 \]
\end{lemma}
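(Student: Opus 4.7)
The plan is to read everything off the defining inequality of the binding period, \( |\hat I_j| \le \gamma_j d(c_j) \) for \( 1 \le j \le p(x)-1 \), combined with the global bound \( \gamma_j \le 1/2 \) from Section \ref{fixdelta}, the summability \( \sum_j \gamma_j < \infty \) coming from \((\star)\), and the derivative relation (\ref{derdist}). The singular case \( \ell(c) < 1 \) is vacuous since then \( p(x)=1 \), so throughout I assume \( \ell(c) > 1 \).

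First I would establish the two estimates (\ref{bindingratio}). A triangle inequality shows that for every \( z \in \hat I_j \) and every \( c' \in \mathcal C \), \( d(z,c') \ge d(c_j,c') - |z-c_j| \ge d(c_j) - |\hat I_j| \), so \( d(\hat I_j) \ge d(c_j) - |\hat I_j| \ge d(c_j)/2 \) using \( |\hat I_j| \le \gamma_j d(c_j) \le d(c_j)/2 \); dividing gives \( |\hat I_j|/d(\hat I_j) \le 2\gamma_j \). For the second inequality of (\ref{bindingratio}) I would decompose
\[ \frac{|D^2 f(x_j)|}{|Df(y_j)|} = \frac{|D^2 f(x_j)|}{|Df(x_j)|} \cdot \frac{|Df(x_j)|}{|Df(y_j)|}, \]
bounding the first factor by \( 1/d(x_j) \lesssim 1/d(\hat I_j) \) via (\ref{derdist}), and the second by a uniform constant: on \( \hat I_j \) the function \( d \) varies by at most a factor \( 2 \) (since \( |\hat I_j| \le d(\hat I_j) \)), so by (\ref{eq:der0}) inside a critical neighbourhood the ratio \( |Df(x_j)|/|Df(y_j)| \) is uniformly bounded, while outside any critical neighbourhood \( |Df| \) itself is bounded above and below.

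With (\ref{bindingratio}) in hand, the distortion bound follows by feeding these estimates into (\ref{gendist}) applied to the orbit \( \hat I_1, \dots, \hat I_k \):
\[ \mathcal D(f^k, \hat I_1) \le \prod_{j=1}^{k} \Bigl( 1 + C\,\tfrac{|\hat I_j|}{d(\hat I_j)} \Bigr) \le \prod_{j=1}^{k} (1 + 2C\gamma_j) \le \exp\!\Bigl( 2C \sum_{j \ge 1} \gamma_j \Bigr) =: \Gamma, \]
where \( C \) is the implicit constant in (\ref{bindingratio}) and \( \Gamma \) is uniform in \( x \) and in \( 1\le k\le p(x)-1 \) thanks to \( \sum \gamma_j < \infty \). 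The integral bound \( \int_{\hat I_j} dx/d(x) \le |\hat I_j|/d(\hat I_j) \le 2\gamma_j \) is then immediate from the first half of (\ref{bindingratio}). Finally, the pointwise derivative comparison for \( y,z \in [x,c] \) follows by writing \( |Df^k(f(y))|/|Df^k(f(z))| = \prod_{i=0}^{k-1} |Df(f^{i+1}(y))|/|Df(f^{i+1}(z))| \) and noting that both \( f^{i+1}(y), f^{i+1}(z) \) lie in \( \hat I_{i+1} \), so the product is bounded by \( \mathcal D(f^k, \hat I_1) \le \Gamma \).

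The main subtlety is the second half of (\ref{bindingratio}): the constant in \( |Df(x_j)|/|Df(y_j)| \) must be truly uniform, independent of \( j \) and of the critical point closest to \( \hat I_j \). This is what forces the case split between \( \hat I_j \subset \Delta \) (where one uses (\ref{eq:der0}) together with the a priori ratio \( d(x_j)/d(y_j) \in [1/2,2] \) obtained in the first step) and \( \hat I_j \) outside every critical neighbourhood (where piecewise \( C^2 \) regularity yields uniform bounds on \( Df \) and \( D^2 f \)). Once this is settled, all remaining assertions follow by routine manipulations.
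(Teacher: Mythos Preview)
Your proof is correct and follows essentially the same route as the paper's: the binding inequality \(|\hat I_j|\le \gamma_j d(c_j)\) together with \(\gamma_j\le 1/2\) gives the ratio bound \(|\hat I_j|/d(\hat I_j)\le 2\gamma_j\), the one-step ratio \(\sup|D^2f|/\inf|Df|\lesssim 1/d(\hat I_j)\) follows from the comparability of \(d(\cdot)\) on \(\hat I_j\), and then (\ref{gendist}) plus \(\sum\gamma_j<\infty\) yields the uniform distortion bound \(\Gamma\), from which the remaining assertions drop out. The only cosmetic difference is that the paper bounds \(\sup_{\hat I_j}|D^2f|\) and \(\inf_{\hat I_j}|Df|\) separately via (\ref{eq:der0}), whereas you factor through \(|D^2f(x_j)|/|Df(x_j)|\) using (\ref{derdist}) and then control \(|Df(x_j)|/|Df(y_j)|\) with an explicit case split; your version is arguably a bit more careful about what happens when \(\hat I_j\) lies outside every \(\Delta(c',\delta)\).
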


\begin{proof}
 The definition of binding period is designed to guarantee that the
 length \( |\hat I_{j}| \) of the interval 
 \(\hat  I_{j}=(f^{j}(x), f^{j}(c)) \) is small
 compared to its distance \( d(\hat I_{j}) \) to the critical set.
Indeed, 
 from the definition we have 
 \( d(\hat I_{j}) \geq d(f^{j}(c), \mathcal C) -
  d(f^{j}(c), f^{j}(x)) \geq (1-\gamma_{j}) d(f^{j}(c),
  \mathcal C) \)
  and therefore, for every \( 1\leq j\leq p-1  \) we have 
\[
  \frac{|\hat I_{j}|}{d(\hat I_{j})} \leq \frac{d(f^{j}(x),
f^{j}(c))}{(1-\gamma_{j})
  d(f^{j}(c), \mathcal C)}
  \leq \frac{\gamma_{j}}{1-\gamma_{j}}\leq 2 \gamma_{j}.
\]
In particular this also implies, from the order of the critical
points, that 
  \( \sup_{\hat I_{j}}|Df^{2}| \lesssim d(\hat I_{j})^{\ell-2}\) and 
\(  \inf_{\hat I_{j}}|Df| \gtrsim d(\hat
I_{j})^{\ell-1} \)
  and therefore 
\[
   \sup_{x_{j}, y_{j}\in \hat I_{j}}\frac{|D^{2}f(x_{j})|}{|Df(y_{j})|} 
   = 
\frac{\sup_{\hat I_{j}} |D^{2}f|}{\inf_{\hat I_{j}}
|Df|}   
\lesssim \frac{1}{d(\hat I_{j})}	 
\]
where \( \lesssim \) means that the bound holds up to a
multiplicative constant independent of \( \delta, I \) or \( j \).
  Now,  from \eqref{gendist} and \eqref{bindingratio}
   we have 
   \[ 
   \mathcal D(f^{k}, \hat I_{1})  \leq 
 \prod_{j=1}^{k}
\left(1+ \sup_{x_{j}, y_{j}\in I_{j}}\frac{|D^{2}f(x_{j})|}{
|Df(y_{j})|}|\hat I_{j}|\right)
\leq   \prod_{j=1}^{k} \left(1+  C\frac{|\hat  I_{j}|}{d(\hat I_{j})}\right)
\leq \prod_{j=1}^{k} (1+  2C \gamma_{j})
    \]
The right hand side is uniformly bounded by the summability of the \(
\gamma_{j} \)'s. Indeed, taking logs and using the inequality \( \log
(1+x) \leq x \) for all \( x\geq 0 \) we get \( \log \prod
(1+C\gamma_{j}) = \sum \log (1+C\gamma_{j})  \leq \sum C\gamma_{j}.
\)  This proves the uniform bound on the distortion \( \mathcal
D(f^{k}, \hat I_{1}) \). The fact that 
\(  |Df^k(f(x))| \approx  |Df^k(f(c))| \) then follows directly from
the definition of \( \mathcal D(f^{k}, \hat I_{1}) \) and the fact
that it is uniformly bounded. Finally notice that \( \int_{\hat
I_{j}}1/d(x) \leq |\hat I_{j}|/d(\hat I_{j}) \) and therefore the
required bound follows from \eqref{bindingratio}.
  \end{proof}

\subsection{The binding period partition}

The partition \( \mathcal I \) is defined quite abstractly and we do
not have direct information about the sizes of the partition elements 
and in particular the relation between their sizes and their distances
to the critical set. However, using the distortion bounds obtained
above, we can prove the following

\begin{lemma}\label{distreturn}
Let \( I \in\mathcal I \) with \( p(I) = p \) and \( I \) in the
neighbourhood of a critical point with order \( \ell \). 
Then 
\begin{equation}\label{distreturn1}
D_{p-1}^{-2/(2\ell-1)}  
\lesssim  \inf_{x\in I} d(x) \leq 
\sup_{x\in I}d(x) \lesssim D_{p-2}^{-2/(2\ell-1)}. 
 \end{equation}
In particular, letting \( \ell_{k}=\ell_{k}(c) \) denote the order of 
the critical/singular point closest to \( c_{k} \) we have 
\begin{equation}\label{distreturn2}
\mathcal D(f, I) \lesssim   \left[\frac{D_{p-1}}{D_{p-2}}\right]
  ^{\frac{ 2(\ell - 1)}{2\ell-1}} 
  \lesssim d(c_{p-1})^{ \frac{2(\ell - 1)(\ell_{p-1}-1)}{2\ell-1} }
\end{equation}
and
\begin{equation}\label{distreturn3}
\int_{I}\frac{1}{d(x)}dx \lesssim 
\log 
\left[\frac{D_{p-1}}{D_{p-2}}\right]
 ^{\frac{ 2(\ell - 1)}{2\ell-1}} 
 \lesssim \log 
 d(c_{p-1})^{-1}.
 \end{equation}
\end{lemma}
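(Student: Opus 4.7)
The plan is to convert the binding inequalities at times $p-1$ and $p$ into pointwise bounds on the distance $d(x)$ of a point $x \in I$ to the critical set, and then feed these back into the critical-order formula $|Df(x)| \approx d(x)^{\ell-1}$ from \eqref{eq:der0}. The key link between the two time indices is Lemma \ref{le:distorcao}, which gives bounded distortion of $f^{k}$ restricted to $\hat I_{1}$ throughout the binding period.

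Since $\delta$ was chosen so that the critical neighbourhoods are disjoint, for $x \in I \subset \Delta(c,\delta)$ one has $d(x)=d(x,c)$, and the order-$\ell$ condition gives $|\hat I_{1}(x)|\approx d(x)^{\ell}$. Applying bounded distortion from Lemma~\ref{le:distorcao}, $|Df^{k}(f(x))|\approx D_{k}$ for $1\le k\le p-1$, so by the mean value theorem
\[
|\hat I_{j}(x)|\approx D_{j-1}\,d(x)^{\ell} \qquad (1\le j\le p).
\]
Since $p\ge h(\delta)$, item (2) of Section~\ref{fixdelta} forces $\gamma_{p}<1/2$, so the minimum defining $\gamma_{p}$ is realised by its second argument and $\gamma_{p}d(c_{p})=D_{p-1}^{-1/(2\ell-1)}$. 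The binding inequality $|\hat I_{p}(x)|>\gamma_{p}d(c_{p})$, combined with $|\hat I_{p}(x)|\approx D_{p-1}d(x)^{\ell}$, then gives $d(x)^{\ell}\gtrsim D_{p-1}^{-2\ell/(2\ell-1)}$, i.e.\ $d(x)\gtrsim D_{p-1}^{-2/(2\ell-1)}$. The same reasoning at time $p-1$, using $|\hat I_{p-1}(x)|\le\gamma_{p-1}d(c_{p-1})=D_{p-2}^{-1/(2\ell-1)}$, yields the matching upper bound $d(x)\lesssim D_{p-2}^{-2/(2\ell-1)}$, establishing \eqref{distreturn1}.

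For \eqref{distreturn2}, the one-step generalized distortion equals $\sup_{I}|Df|/\inf_{I}|Df|$, and \eqref{eq:der0} gives $\mathcal D(f,I)\approx(\sup_{I}d/\inf_{I}d)^{\ell-1}$. Dividing the bounds in \eqref{distreturn1} produces $\sup_{I}d/\inf_{I}d\lesssim (D_{p-1}/D_{p-2})^{2/(2\ell-1)}$, which upon raising to the $(\ell-1)$-th power yields the first inequality; the second follows from the identity $D_{p-1}/D_{p-2}=|Df(c_{p-1})|\approx d(c_{p-1})^{\ell_{p-1}-1}$. For \eqref{distreturn3}, since $d(x)\approx x-c$ on $I$,
\[
\int_{I}\frac{dx}{d(x)} \approx \log\frac{\sup_{I}d}{\inf_{I}d} \lesssim \log\left[\frac{D_{p-1}}{D_{p-2}}\right]^{2/(2\ell-1)},
\]
which is comparable, up to the bounded factor $1/(\ell-1)$, to the stated quantity $\log(D_{p-1}/D_{p-2})^{2(\ell-1)/(2\ell-1)}$; the final comparison to $\log d(c_{p-1})^{-1}$ is again the relation $D_{p-1}/D_{p-2}\approx d(c_{p-1})^{\ell_{p-1}-1}$.

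The main bookkeeping obstacle is ensuring that both $p$ and $p-1$ are at least $h(\delta)$, so that the minimum defining $\gamma_{p}$ and $\gamma_{p-1}$ is in each case realised by its second argument; this can be secured by shrinking $\delta$ slightly further in the choices of Section~\ref{fixdelta}. One must also track signs in \eqref{distreturn3} when $D_{p-1}/D_{p-2}<1$ (the case $\ell_{p-1}>1$), but since $\sup_{I}d/\inf_{I}d\ge 1$ the left-hand side is automatically non-negative, so the bound is non-trivial only in the expanding regime $|Df(c_{p-1})|\gtrsim 1$ where all logarithms appearing have the correct sign.
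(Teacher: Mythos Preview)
Your argument is correct and mirrors the paper's proof essentially step for step: both use the bounded distortion from Lemma~\ref{le:distorcao} to get $|\hat I_{j}|\approx D_{j-1}d(x)^{\ell}$, invoke the binding inequalities at times $p-1$ and $p$ together with $\gamma_{n}d(c_{n})=D_{n-1}^{-1/(2\ell-1)}$ to obtain \eqref{distreturn1}, and then read off \eqref{distreturn2}--\eqref{distreturn3} from the critical-order formula and the relation $D_{p-1}/D_{p-2}\approx d(c_{p-1})^{\ell_{p-1}-1}$. Your closing remarks on the $p-1\ge h(\delta)$ bookkeeping and the sign issue when $\ell_{p-1}>1$ are points the paper handles only implicitly (the latter via the remark that such $I$ may be empty), so your treatment is if anything slightly more careful.
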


\begin{remark}
We remark that the distortion not uniformly bounded in \( p \) implying
that the induced map does not have uniformly bounded distortion.
Notice also that for some values of \( p \) it may happen that \(
D_{p-2}^{-2/(2\ell-1)} \ll  D_{p-1}^{-2/(2\ell-1)} \); in this case
the corresponding interval \( I \) would necessarily be empty, i.e.
there is no \( x \) with binding period \( p \).
\end{remark}

\begin{proof}
From Lemma \ref{le:distorcao}
 and the definition of binding period 
we have, for any \( x\in I \), 
\[ d(x) = |\hat I_{0}| \approx  |\hat I_{1}|^{1/\ell} 
\approx [D_{p-1}^{-1}|\hat I_{p}|]^{1/\ell} \geq
[D_{p-1}^{-1}\gamma_{p}d(c_{p})]^{1/\ell}
\]
and 
\[ 
d(x) = |\hat I_{0}| \approx  | \hat I_{1}|^{1/\ell} \approx 
[D_{p-2}^{-1} |\hat I_{p-1}| ]^{1/\ell}
\leq 
[D_{p-2}^{-1}\gamma_{p-1} d(c_{p-1})]^{1/\ell}
\]
By taking a sufficiently small \( \delta \) we can assume that \( p \)
is sufficiently large so that \( \gamma_{p-1}, \gamma_{p} < 1/2\) and 
therefore, from the definition of the sequence \( \{\gamma_{n}\} \) we
get 
\[ \gamma_{n}d(c_{n}) = D_{n-1}^{-1/(2\ell-1)}.
\]
Thus, substituting into the expressions above gives 
\[ d(x) \gtrsim 
[D_{p-1}^{-1}\gamma_{p}d(c_{p})]^{1/\ell} 
= [D_{p-1} D_{p-1}^{-1/(2\ell-1)}]^{1\ell} =
[D_{p-1}^{-2\ell/(2\ell-1)}]^{1/\ell} = D_{p-1}^{-2/(2\ell-1)}
\]
and, similarly,  
\[ 
d(x) \lesssim D_{p-2}^{-2/(2\ell-1)}.
\]
This gives the first set of inequalities. 
As a consequence we immediately get
\[ 
D_{p-2}^{-2(\ell-1)/(2\ell-1)}
\gtrsim \sup_{I}|Df(x)| \geq \inf_{I} |Df(x)| \gtrsim
D_{p-1}^{-2(\ell-1)/(2\ell-1)}
 \]
 and therefore, 
 \[ 
 \mathcal D(f, I) = \sup_{x,y \in I}\frac{|Df(x)|}{|Df(y)|} \lesssim 
 \left[\frac{D_{p-1}}{D_{p-2}}\right]
  ^{\frac{ 2(\ell - 1)}{2\ell-1}} 
    \]
  This gives the first inequality in \eqref{distreturn2}. To get the
  second inequality we simply use the fact that \( D_{p-1}\approx
  D_{p-2}d(c_{p-1})^{\ell_{p-1}-1} \). 
 To get the last inequality we simply integrate \( 1/d(z) \) over the
interval \( I=(x,y) \)
to get 
  \[ 
  \int_{I}\frac{1}{d(z)}dx = |\log d(x) - \log d(y)| \lesssim \log 
   \left[\frac{D_{p-1}}{D_{p-2}}\right]^{2/(2\ell-1)}
   \]
   and then argue as above.
 \end{proof}   

\subsection{Expansion during binding periods}

\begin{lemma}\label{le:london1}
For all  $c\in\mathcal{C}$ , 
$x\in\Delta(c,\delta)$
and \( p=p(x) \), we have 
\begin{equation}\label{eq1:london1}
|Df^p(x)| \gtrsim  D_{p-1}^{\frac{1}{2\ell-1}} 
\end{equation}
In particular we can choose \( \delta \) small enough so that 
\[
|Df^p(x)|  \geq 2/\kappa.
\] 
\end{lemma}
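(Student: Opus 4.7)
The plan is to use the chain rule $|Df^p(x)| = |Df(x)|\cdot|Df^{p-1}(f(x))|$ and estimate each factor separately using the binding definition together with the distortion control from Lemma 4.1.

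First I would treat the critical case $\ell(c) > 1$. By the bounded distortion estimate $\mathcal{D}(f^{p-1}, \hat I_1) \leq \Gamma$ from Lemma 4.1, together with $|Df^{p-1}(f(c))| = D_{p-1}$, we have $|Df^{p-1}(f(x))| \approx D_{p-1}$ uniformly for $x\in\Delta(c,\delta)$. By the mean value theorem (and this same distortion bound), $|\hat I_p| \approx D_{p-1}\,|\hat I_1|$, and the nondegeneracy assumption \eqref{eq:der0} gives $|\hat I_1| \approx |\hat I_0|^{\ell}$. On the other hand, the defining inequality of the binding period at step $p$ says $|\hat I_p| > \gamma_p d(c_p)$. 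Since $\delta$ was chosen in Section \ref{fixdelta} small enough that $\gamma_n < 1/2$ for all $n\geq h(\delta)$, the $\min$ in \eqref{gamman} is attained by the second term, so $\gamma_p d(c_p) = D_{p-1}^{-1/(2\ell-1)}$.

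Combining these ingredients,
\[
D_{p-1}\,|\hat I_0|^{\ell} \;\gtrsim\; |\hat I_p| \;\gtrsim\; D_{p-1}^{-1/(2\ell-1)},
\]
which rearranges to $|\hat I_0| \gtrsim D_{p-1}^{-2/(2\ell-1)}$. Since $|Df(x)| \approx d(x,c)^{\ell-1} = |\hat I_0|^{\ell-1}$, this yields $|Df(x)| \gtrsim D_{p-1}^{-2(\ell-1)/(2\ell-1)}$. Multiplying by the previous estimate for $|Df^{p-1}(f(x))|$ and using the arithmetic identity
\[
1 - \frac{2(\ell-1)}{2\ell-1} = \frac{1}{2\ell-1}
\]
gives exactly \eqref{eq1:london1}. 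For the singular case $\ell(c) < 1$, by definition $p=1$ and $D_0 = 1$, so the claim reduces to $|Df(x)| \gtrsim 1$, which is immediate from $|Df(x)| \approx d(x,c)^{\ell-1}$ with $\ell-1 < 0$ and $d(x,c) \leq \delta$ small.

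The main (very mild) obstacle is simply keeping the algebra of the exponents straight and confirming that the binding period is large enough for $\gamma_p$ to be given by the ``non-trivial'' branch of the minimum in \eqref{gamman}; this is exactly what was arranged by the choice of $\delta$ in Section \ref{fixdelta}. The final assertion $|Df^p(x)| \geq 2/\kappa$ then follows immediately: since $p \geq h(\delta)$, condition (3) in Section \ref{fixdelta} ensures $D_{p-1}^{1/(2\ell-1)} \gg 2/\kappa$, absorbing the implicit constant from \eqref{eq1:london1}; in the singular case, we simply shrink $\delta$ further so that $d(x,c)^{\ell-1} \geq 2/\kappa$ throughout $\Delta(c,\delta)$.
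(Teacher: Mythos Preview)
Your proof is correct and follows essentially the same approach as the paper: both use the chain rule $|Df^p(x)|=|Df^{p-1}(f(x))|\cdot|Df(x)|$, bounded distortion to get $|Df^{p-1}(f(x))|\approx D_{p-1}$, and the lower bound $d(x)\gtrsim D_{p-1}^{-2/(2\ell-1)}$ to control $|Df(x)|$. The only difference is cosmetic: the paper cites Lemma~\ref{distreturn} for this last bound, while you re-derive it inline from the binding inequality $|\hat I_p|>\gamma_p d(c_p)$ and the identity $\gamma_p d(c_p)=D_{p-1}^{-1/(2\ell-1)}$; your explicit treatment of the singular case $\ell(c)<1$ is a nice addition.
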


\begin{proof}
Using the chain rule, bounded distortion in  binding periods
 and Lemma \ref{distreturn} we have 
\[ 
|Df^{p}(x)| = |Df^{p-1}(f(x)) \cdot Df(x)| 
\gtrsim D_{p-1}
D_{p-1}^{-2(\ell-1)/(2\ell-1)} 
= D_{p-1}^{\frac{1}{2\ell-1}} 
 \]
This gives \eqref{eq1:london1}. The inequality 
\( |Df^p(x)|  \geq 2/\kappa \) then just follows from the choice of \( 
\delta \) in Section \ref{fixdelta}. 
\end{proof}

\section{Inducing}

\subsection{Expansion of the induced map}

\begin{lemma}\label{le:supzero}
For every $x\in M$ we have
\[ 
|D\hat f(x)| \geq 2.
 \]

\end{lemma}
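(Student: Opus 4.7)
The plan is to split into the two cases $x\in M_f$ and $x\in M_b$ from the definition of $\tau$ and apply the expansion estimates already available.

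For $x\in M_f$, by definition $\tau(x)=q_0$ and $f^j(x)\notin\Delta$ for $0\le j<q_0$, so the hypothesis of \eqref{eq:ue} is satisfied with $n=q_0$. This gives
\[ |D\hat f(x)| = |Df^{q_0}(x)| \ge C(\delta)e^{\lambda(\delta)q_0}, \]
and the choice of $q_0$ in the paragraph defining it was precisely made so that the right-hand side is $\ge 2$.

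For $x\in M_b$, set $y=f^{l_0}(x)\in\Delta(c,\delta)$ for some $c\in\mathcal C$, so $\tau(x)=l_0+p_0$ with $p_0=p(y)$. The chain rule gives
\[ |D\hat f(x)| = |Df^{l_0}(x)|\cdot |Df^{p_0}(y)|. \]
For the second factor, Lemma \ref{le:london1} together with the choice of $\delta$ in \ref{fixdelta} yields $|Df^{p_0}(y)|\ge 2/\kappa$. For the first factor, when $l_0\ge 1$ the orbit segment $x,f(x),\dots,f^{l_0-1}(x)$ lies outside $\Delta$ while $f^{l_0}(x)=y\in\Delta$, which is precisely the hypothesis of \eqref{kappa} with $n=l_0$; hence $|Df^{l_0}(x)|\ge\kappa$ and the product is $\ge 2$. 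When $l_0=0$ the first factor is $1$ and we only need $|Df^{p_0}(x)|\ge 2$; this is automatic from Lemma \ref{le:london1} because $\delta$ was chosen in \ref{fixdelta} so that $D_{n-1}^{1/(2\ell-1)}\gg 2/\kappa$ for every $n\ge h(\delta)$, which makes the implicit constant in \eqref{eq1:london1} large enough to absorb any factor $\kappa$ and force $|Df^{p_0}(x)|\ge 2$.

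There is no real obstacle here since every ingredient has already been prepared: both $q_0$ and $\delta$ were set exactly so that the expansion outside $\Delta$ and the expansion during a binding period each accumulate, or individually produce, a factor of at least $2$. The only mild subtlety is handling $l_0=0$ without appealing to \eqref{kappa}, which is done by using the quantitative strength of Lemma \ref{le:london1} instead of just the qualitative lower bound $2/\kappa$.
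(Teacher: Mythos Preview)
Your proof is correct and follows exactly the approach the paper has in mind: split into $M_f$ and $M_b$, invoke \eqref{eq:ue} with the choice of $q_0$ for the first case, and combine \eqref{kappa} with Lemma~\ref{le:london1} for the second. The paper's own proof is a single sentence pointing to these same ingredients; you have simply unpacked it, and you even flag the $l_0=0$ sub-case that the paper leaves implicit (alternatively one can simply assume $\kappa\le 1$ without loss of generality, so that $2/\kappa\ge 2$).
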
    

\begin{proof}
This follows immediately from the definition of the induced map and
the expansion estimates during binding periods obtained in Lemma
\ref{le:london1} together with conditions \ref{kappa} and 
\ref{eq:ue}, the choice of
\( \delta \) and the corresponding choice of \( q_{0} \). 
\end{proof}

\subsection{Distortion of the induced map}
We now study the distortion of the induced map \( \hat f \) on each
of its branches.

\begin{lemma}\label{le:globaldist}
There exists a constant \( D=D(\delta)>0 \) such that 
\begin{equation}\label{globaldist0}
    \mathcal D(f^{\tau}, I) \leq D 
\quad\text{ and } \quad
\mathcal D(f^{\tau}, I) \lesssim 
d(c_{p-1})^{ \frac{2(\ell - 1)(\ell_{p-1}-1)}{2\ell-1} }
\end{equation}
for all \( I\in \mathcal M_{f} \) (in which case \( \tau \equiv q_{0}
\))
and \( I\in \mathcal M_{b} \) (in which case \( \tau = l+p \))
respectively, where \( \ell \) is the order of the critical point
associated to \( I_{l} \).
Also, we have 
\[ \sum_{j=0}^{\tau-1}
\int_{I_{j}} \frac{1}{d(x)}dx \leq  D
\]
and
\begin{equation}\label{globaldist1}
\sum_{j=0}^{\tau-1}
\int_{I_{j}}\frac{1}{d(x)}dx \leq D +
\log d(c_{p-1})^{-1}
\end{equation}
respectively for \( I\in \mathcal M_{f} \) and \( I\in \mathcal M_{b}
\).
\end{lemma}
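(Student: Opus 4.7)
The plan is to split the iterate $f^{\tau}$ into its free piece and its binding piece and apply the multiplicativity of $\mathcal D$ together with the previous lemmas. For $I\in\mathcal I_f$ one has $\tau=q_0$ and the orbit $I_0,\dots,I_{q_0-1}$ stays outside $\Delta$. For $I\in\mathcal I_b$ one has $\tau=l+p$ with $l=l_0<q_0$ constant on $I$, the segment $I_0,\dots,I_{l-1}$ lies outside $\Delta$, the interval $I_l$ enters some $\Delta(c,\delta)$, and $I_{l+j}=\hat I_j$ for $1\le j\le p-1$.

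For the distortion bound, directly from the definition of $\mathcal D$ we have
\[ \mathcal D(f^{l+p},I)=\mathcal D(f^l,I)\cdot\mathcal D(f,I_l)\cdot\mathcal D(f^{p-1},\hat I_1) \]
(only the first factor appears for $I\in\mathcal I_f$, with $l$ replaced by $q_0$). On the free stretch $d(x)\ge\delta$ combined with \eqref{derdist} yields $\sup_{I_j}|D^2f|/\inf_{I_j}|Df|\lesssim 1/\delta$, and then \eqref{gendist} bounds the free factor by $(1+C(\delta)|M|)^{q_0}=:D_1(\delta)$. The return-step factor $\mathcal D(f,I_l)$ is precisely what \eqref{distreturn2} controls, giving $\lesssim d(c_{p-1})^{2(\ell-1)(\ell_{p-1}-1)/(2\ell-1)}$, and the binding factor is bounded by the uniform constant $\Gamma$ from Lemma \ref{le:distorcao}. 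Multiplying the three bounds yields both estimates in \eqref{globaldist0}; in the free case only the first factor remains, producing the uniform constant $D$.

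The same three-part decomposition handles the integral sums. On the free stretch, $d(x)\ge\delta$ gives $\int_{I_j}dx/d(x)\le|M|/\delta$ and at most $q_0$ such terms contribute a $\delta$-dependent constant; the return step contributes $\int_{I_l}dx/d(x)\lesssim\log d(c_{p-1})^{-1}$ by \eqref{distreturn3}; the binding steps contribute $\sum_{j=1}^{p-1}\int_{\hat I_j}dx/d(x)\le 2\sum_{j\ge 1}\gamma_j<\infty$ by Lemma \ref{le:distorcao} together with the summability of $\gamma_j$ inherited from $(\star)$. Absorbing the free and binding constants into a single $D=D(\delta)$ gives the two integral estimates. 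The argument is essentially careful bookkeeping, since all the analytic substance was already extracted in Lemmas \ref{le:distorcao} and \ref{distreturn}; the only point requiring mild attention is to observe that the nonuniform distortion factor $d(c_{p-1})^{2(\ell-1)(\ell_{p-1}-1)/(2\ell-1)}$ and the logarithmic term $\log d(c_{p-1})^{-1}$ arise solely from the single return step $j=l$, which the multiplicativity of $\mathcal D$ and the splitting of the sum make transparent.
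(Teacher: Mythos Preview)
Your proof is correct and follows essentially the same approach as the paper: the same three-factor decomposition $\mathcal D(f^{\tau},I)=\mathcal D(f^{l},I)\cdot\mathcal D(f,I_{l})\cdot\mathcal D(f^{p-1},\hat I_{1})$, the same appeal to Lemmas~\ref{le:distorcao} and~\ref{distreturn} for the last two factors, and the same three-part splitting of the integral sum. The only cosmetic difference is that for the free segment the paper invokes the exponential expansion \eqref{eq:ue} to obtain a geometric bound on $|I_j|$, whereas you use the cruder estimate $|I_j|\le |M|$ together with the fact that there are at most $q_0$ free iterates; since $q_0=q_0(\delta)$ is fixed, both routes produce a $\delta$-dependent constant and the conclusion is unaffected.
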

\begin{proof}
For \( I\in \mathcal I_{f} \) we have standard distortion estimates
for uniformly expanding maps which give a uniform distortion bound
\( D \) depending on the size of \( \Delta \). For \( I\in \mathcal I_{b} \) on the other
hand we write 
\[ \mathcal D(f^{\tau}, I) = \mathcal D(f^{l}, I)\cdot \mathcal D(f,
I_{l}) \cdot \mathcal D(f^{p-1}, I_{l+1}).
 \]
The first term consists of  iterates for which \( I_{j} \) lies
always outside 
\( \Delta \) and therefore is bounded above by the same constant \( D
\) as above. The second and third term have already been estimated
above in Lemmas \ref{le:distorcao} and \ref{distreturn}. Combining
these estimates we complete the first set of estimates. 

For \( I\in \mathcal M_{f} \), 
using the uniform expansion outside \( \Delta \) we have \(
|I_{j}|\leq c(\delta)^{-1}e^{-\lambda(\delta)(\tau-j)} \) and
therefore 
\[ 
 \sum_{j=0}^{\tau-1} \int_{I_{j}}\frac{1}{d(x)}dx 
 \leq  \sum_{j=0}^{\tau-1} 
 \frac{|I_{j}|}{d(I_{j})} 
 \leq  \sum_{j=0}^{\tau-1} 
 \frac{c(\delta)^{-1}e^{-\lambda(\delta)(\tau-j)} }{\delta}
 \leq D.  
 \]
 For \( I\in \mathcal M_{b} \) we again split the sum into three
parts corresponding to the initial iterates outside \( \Delta \), the
first iterate in \( \Delta \), and the following binding period. The
fist part of the sum is bounded by the same constant \( D \) as
above. The second and third have already been estimated above. Thus,
from Lemmas \ref{le:distorcao} and \ref{distreturn} and in particular 
\eqref{distreturn3} we get the statement. 

\end{proof}

\section{Summability}

We are now ready to prove the summable variation and the summable
inducing time 
properties.

\subsection{Summable variation}

From the definition of \( \omega_{I} \) that we have 
\[ \var_{M} \omega_{I} = \var_{I} \omega_{I} + 2\sup_{I} \omega_{I} 
=  \var_{I}\frac{1}{|Df^{\tau}|} 
  + 2 \sup_{I}\frac{1}{|Df^{\tau}|}\]
  For the supremum we have, from Lemma \ref{le:supzero}, 
  \begin{equation}\label{sup}
  \sup_{I}\frac{1}{|Df^{\tau}|} \leq \frac{1}{
  D_{p-1}^{1/(2\ell-1)}}
   \end{equation}
and for the variation, we have, 
substituting the estimates in \eqref{sup}, \eqref{globaldist0} and
 \eqref{globaldist1} into 
the formula obtained in Lemma  \ref{le:varest}, 
\[
 \var_{I}\frac1{|Df^{\tau}|}\lesssim \frac{\mathcal D(f^{\tau},
I)}{\inf_{I}|Df^{\tau}|}
  \cdot  \sum_{j=0}^{\tau-1}\int_{I_{j}}\frac{dx}{d(x)}
  \lesssim 
 \frac{D+\log d(c_{p-1})^{-1}}
 {d(c_{p-1})^{ - \frac{2(\ell - 1)(\ell_{p-1}-1)}{2\ell-1} }
 D_{p-1}^{1/(2\ell-1)}}. 
  \]
We can write 
\[ D_{p-1}^{\frac{1}{2\ell-1}}
\approx D_{p-2}^{\frac{1}{2\ell-1}}
d(c_{p-1})^{\frac{\ell_{p-1}-1}{2\ell-1}}
\]
and 
\[ 
d(c_{p-1})^{ - \frac{2(\ell - 1)(\ell_{p-1}-1)}{2\ell-1} }
d(c_{p-1})^{\frac{\ell_{p-1}-1}{2\ell-1}} = 
d(c_{p-1})^{ - \frac{(2\ell - 1)(\ell_{p-1}-1)}{2\ell-1} }
= d(c_{p-1})^{(1-\ell_{p-1})}
\]
and so, substituting above, gives
\begin{equation}\label{gap}
 \var_{I}\frac1{|Df^{\tau}|} \lesssim 
\frac{D+\log d(c_{p-1})^{-1}}
{d(c_{p-1})^{(1-\ell_{p-1})} D_{p-2}^{1/(2\ell-1)} } \leq 
\frac{D+\log d(c_{p-1})^{-1}}
{d(c_{p-1}) D_{p-2}^{1/(2\ell-1)}}. 
\end{equation}

The summability then follows immediately from \( (\star) \).

  \subsection{Summable inducing times}
To prove the summability of the inducing time notice first of all
that 
the number of intervals of a given inducing time is uniformly
bounded. 
Therefore it is sufficient to prove the summability with respect to
the binding time. For this we give a basic upper bound for  the size
of each element \( I\in\mathcal I \) using the mean value theorem and
Lemma \ref{le:supzero}. This gives 
\[\sum \tau(I) |I| \lesssim \sum_{p} p |I| \lesssim
\sum_{p}\frac{p}{D_{p-1}^{1/(2\ell-1)}}. 
\]
Again, the summability follows directly from \( (\star) \).
This completes the proof of the Theorem.

\subsection{Final remarks}
    Notice that there is a significant gap between the first bound and 
    the second bound in \eqref{gap}, particularly
    in the special case in which there are no singularities and where 
    therefore \( \ell>1 \) for every critical point. In this case we
    get 
 \[ 
 \var_{I}\frac1{|Df^{\tau}|} \lesssim 
 \frac{\log d(c_{p-1})^{-1}}
 {d(c_{p-1})^{(1-\ell_{p-1})} D_{p-2}^{1/(2\ell-1)} } \leq 
 \frac{\log d(c_{p-1})^{-1}}
 {D_{p-2}^{1/(2\ell-1)}}. 
 \]   
This leaves only an extremely mild condition on the recurrence of the 
critical points and therefore the summability conditions 
reduces almost to the condition \(
\sum 1/D_{n}^{1/(2\ell-1)} \) assumed for smooth maps in
\cite{BruLuzStr03}. Ideally we would therefore like to replace
condition \( (\star) \) by the summability condition
\begin{equation*}\tag*{\( (\star\star) \)}
    \sum_{n}  \frac{n\log d(c_{n})^{-1}}
 {d(c_{n})^{(1-\ell_{n})} D_{n-1}^{1/(2\ell-1)}}< \infty
 \end{equation*}
which would automatically reduce to the condition
\[ \sum_{n}  \frac{n\log d(c_{n})^{-1}}
 {D_{n-1}^{1/(2\ell-1)}}< \infty
\]
in the smooth case. This however gives rise to technical difficulties 
that we have not been able to overcome, mainly in the definition of 
the sequence \( \gamma_{n} \), recall \eqref{gamman}. Condition \(
(\star\star) \) does not imply the summability of the \( \gamma_{n} \)
with the definition given in \eqref{gamman} and on the other hand,
changing the definition of \( \gamma_{n} \) to something more natural 
in terms of \( (\star\star) \), such as for example 
\( 1/(d(c_{n})^{(1-\ell_{n})} D_{n-1}^{1/(2\ell-1)}) \) 
gives rise to additional complications in the calculations and
estimates related to the binding period in Section \ref{s:binding}. It
is not clear to us whether these are superficial difficulties which
can be overcome or whether they reflect deeper issues.


\end{document}